\newtheorem{thm}{Theorem}[section]
\newtheorem{cor}[thm]{Corollary}
\newtheorem{prop}[thm]{Proposition}
\newtheorem*{prop*}{Proposition}
\theoremstyle{definition}
\newtheorem{defn}[thm]{Definition}
\newtheorem*{defn*}{Definition}
\newtheorem{stmt}[thm]{Statement}
\newtheorem{openQuestion}{Open Question}[section]
\theoremstyle{remark}
\numberwithin{equation}{section}
\DeclarePairedDelimiter{\paren}{(}{)}
\DeclarePairedDelimiter{\abrack}{\langle}{\rangle}
\DeclarePairedDelimiter{\bracket}{[}{]}
\DeclarePairedDelimiter{\braces}{\{}{\}}
\DeclarePairedDelimiter{\abs}{\vert}{\vert}
\newcommand{\set}[1]{\braces*{#1}}
\DeclarePairedDelimiterX{\setbuild}[2]{\{}{\}}{{#1}\;\delimsize\vert\;{#2}}
\DeclarePairedDelimiterX{\setbuildc}[2]{\{}{\}}{{#1}\vcentcolon{#2}}
\DeclareSymbolFont{AMSb}{U}{msb}{m}{n}
\DeclareMathSymbol{\N}{\mathbin}{AMSb}{"4E} 
\DeclareMathSymbol{\Z}{\mathbin}{AMSb}{"5A} 
\DeclareMathSymbol{\R}{\mathbin}{AMSb}{"52} 
\DeclareMathSymbol{\Q}{\mathbin}{AMSb}{"51} 
\DeclareMathSymbol{\C}{\mathbin}{AMSb}{"43} 
\newcommand{\nfs}{\@ifnextchar.{}{.\@}}
\newcommand{\ie}{i.e\nfs}
\newcommand{\0}{\emptyset}
\newcommand{\ce}{c.e\nfs}
\newcommand{\what}[1]{\widehat{#1}}
\newcommand{\upto}{\mathbin{\!\restriction\!}}
\def\leT{\le_{\rm T}}
\def\geT{\ge_{\rm T}}
\def\RCA0{\textbf{RCA}$_0$}
\newcommand{\concat}{%
  \mathord{
    \mathchoice
    {\raisebox{1ex}{\scalebox{.7}{$\frown$}}}
    {\raisebox{1ex}{\scalebox{.7}{$\frown$}}}
    {\raisebox{.7ex}{\scalebox{.5}{$\frown$}}}
    {\raisebox{.7ex}{\scalebox{.5}{$\frown$}}}
  }
}
\newcommand{\ud}{\overline{\rho}}
\renewcommand{\phi}{\varphi}
\title{The computational content of intrinsic density}
\author{Eric P. Astor}
\date{\today}
\keywords{intrinsic density, Turing degrees, reverse mathematics}
\subjclass[2010]{03D28}
\begin{document}
\begin{abstract}
In a previous paper, the author introduced the idea of intrinsic density --- a restriction of asymptotic density to sets whose density is invariant under computable permutation. We prove that sets with well-defined intrinsic density (and particularly intrinsic density 0) exist only in Turing degrees that are either high ($\mathbf{a}'\geT\0''$) or compute a diagonally non-computable function. By contrast, a classic construction of an immune set in every non-computable degree actually yields a set with intrinsic lower density 0 in every non-computable degree.

We also show that the former result holds in the sense of reverse mathematics, in that (over $\mathbf{RCA}_0$) the existence of a dominating or diagonally non-computable function is equivalent to the existence of a set with intrinsic density 0.
\end{abstract}

\maketitle

\section{Introduction}

Shortly after the launch of the field of computability, practitioners began exploring the connections between the computability of a set and the scarcity of its elements. Post, seeking a non-computable \ce{} set with less computational power than the halting problem, based his approach on the idea of creating sets with increasingly thin complement, making their complements computationally difficult to distinguish from finite. Those following his program developed the classical ``immunity hierarchy'' of thinness properties, from immune sets to cohesive sets. (See Figure~\ref{fig:immunityHierarchy}.) Though none of these exhibited the strict upper bound Post sought, lower bounds on the Turing degrees of levels of this hierarchy describe useful dividing lines in computational content. For example, immune sets exist precisely in every non-computable degree, while the hyperimmune-free degrees are those that contain only computably-bounded functions. Moving upwards in the hierarchy, cohesiveness and other forms of immunity force co-\ce{} sets to be high ($\mathbf{a}'\geT\0''$), while revealing much more complex patterns outside of the $\Delta^0_2$ degrees. \cite{nonhighCohesive}

\begin{figure}[t]
\centering
\hfill%
\begin{subfigure}{.35\linewidth}
\centering
\resizebox{\linewidth}{!}{
\xymatrix@R-8pt@C=-15pt{
	& & *+[F]+\hbox{Cohesive} \ar[dl] \ar[dr] &\\
	& *+[F]+\hbox{q-Cohesive} \ar[d] & & *+[F]+\hbox{r-Cohesive} \ar[d]\\
	& *+[F]+\hbox{shh-Immune} \ar[d] \ar[rr] & & *+[F]+\hbox{sh-Immune} \ar[d]\\
	& *+[F]+\hbox{hh-Immune} \ar[drr] & & *+[F]+\hbox{fsh-Immune} \ar[d]\\
	& *+[F]+\hbox{Dense Immune} \ar[rr] & & *+[F]+\hbox{Hyperimmune} \ar[d]\\
	& & & *+[F]+\hbox{Immune}
	}
}
\subcaption{For general sets}
\end{subfigure}%
\hfill%
\begin{subfigure}{.35\linewidth}
\centering
\resizebox{\linewidth}{!}{
\xymatrix@R-8pt@C=-15pt{
	& *+[F]+\hbox{Cohesive} \ar[dl] \ar[dr] & \\
	*+[F]+\hbox{q-Cohesive} \ar[d] & & *+[F]+\hbox{r-Cohesive} \ar[d]\\
	*+[F]+\hbox{(s)hh-Immune} \ar[dd] \ar[rr] & & *+[F]+\hbox{sh-Immune} \ar[d]\\
	& & *+[F]+\hbox{fsh-Immune} \ar[d]\\
	*+[F]+\hbox{Dense Immune} \ar[rr] & & *+[F]+\hbox{Hyperimmune} \ar[d]\\
	& & *+[F]+\hbox{Immune}
	}
}
\subcaption{For co-\ce{} sets}
\end{subfigure}
\hspace*{\fill}%
\caption{\cite{dirPaper} The graphs of the implications between the classical immunity properties; for $\Delta^0_2$ sets, the implications are the same as in the general case, except that shh-immunity and hh-immunity become equivalent. All implications are strict, and any not shown (excepting those implied by transitivity) are false.}
\label{fig:immunityHierarchy}
\end{figure}
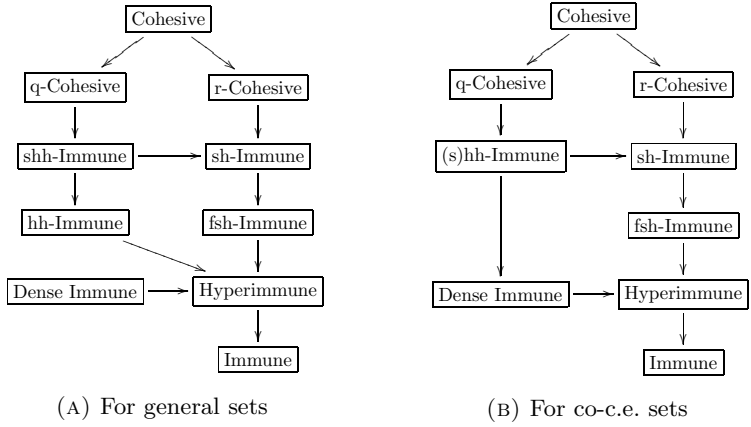

More recently, \citet*{JSgc} (inspired by work of \citet*{genericComplexity} on decidability in group theory) constructed new notions of near-computation, considering computations modulo sets with asymptotic density 0. This approach of computability modulo sparse sets, joined by other researchers (including Downey \cite{DJSdensity,ershovDensity}, Dzhafarov \cite{robustCoding}, Hirschfeldt \cite{upperCones,coarseReducibility,coarseBound}, Igusa \cite{robustCoding,igusaNoMinimalPair}, and McNicholl \cite{ershovDensity,coarseBound}), has uncovered still more connections between thinness and computation. However, since asymptotic density is not invariant under computable permutation, defining thinness in terms of density gives a notion that is incomparable to the standard immunity properties, and generally ill-behaved from the perspective of computability theory.

In a previous paper \cite{dirPaper}, the author suggested that we instead consider sets to be thin if no computable process can sample the set at positive density infinitely often (\ie, with positive upper density); we say such a set has \emph{intrinsic density 0}. More precisely:
\begin{defn}
If $S$ is a subset of the natural numbers, we say that $S$ has intrinsic density 0 if
\begin{equation*}
\lim_{n\to\infty}{\frac{\abs{\pi(S)\upto n}}{n}}=0
\end{equation*}
for all computable permutations $\pi:\omega\to\omega$.
\end{defn}
By Corollary~4.3 of \cite{dirPaper}, this notion remains unchanged if, rather than taking images of $S$ under computable permutations, we take preimages under computable injections; that is,
\begin{prop}
A set $S\subseteq\omega$ has intrinsic density 0 if and only if
\begin{equation*}
\lim_{n\to\infty}{\frac{\abs{p^{-1}(S)\upto n}}{n}}=0
\end{equation*}
for every computable injection $p:\omega\to\omega$.
\end{prop}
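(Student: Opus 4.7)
The plan is to prove the two directions of the equivalence separately; only the forward direction requires real work.

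For the reverse direction: given a computable permutation $\pi$, its inverse $\pi^{-1}$ is itself a computable injection with $(\pi^{-1})^{-1}(S) = \pi(S)$, so the preimage hypothesis applied to $p \defeq \pi^{-1}$ immediately yields $\abs{\pi(S)\upto n}/n \to 0$, giving intrinsic density $0$.

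For the forward direction, fix a computable injection $p$ and build a computable permutation $\sigma$ by interleaving: set $\sigma(0) = p(0)$ and $n_0 = 0$; at stage $2k+1$, set $\sigma(2k+1)$ to the least natural number not already in $\set{\sigma(0), \ldots, \sigma(2k)}$; at stage $2(k+1)$, let $n_{k+1}$ be the least index $> n_k$ with $p(n_{k+1}) \notin \set{\sigma(0), \ldots, \sigma(2k+1)}$, and set $\sigma(2(k+1)) \defeq p(n_{k+1})$. Such $n_{k+1}$ always exists since $p$ has infinite range. A short induction establishes $\set{p(0), \ldots, p(n_{k-1})} \subseteq \set{\sigma(0), \ldots, \sigma(2k-1)}$, using the observation that whenever the greedy choice skips some index $m \in (n_{k-1}, n_k)$, the value $p(m)$ must already appear among the $\sigma$-values. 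Combined with $n_{k-1} \geq k-1$, this delivers the key inclusion $\set{p(0), \ldots, p(k-1)} \subseteq \set{\sigma(0), \ldots, \sigma(2k-1)}$. Injectivity of $\sigma$ is immediate; surjectivity follows because every $v \in p(\omega)$ is covered by the inclusion, and any $v \notin p(\omega)$ must eventually be chosen at an odd stage, since otherwise the strictly increasing sequence $\sigma(1), \sigma(3), \ldots$ would be bounded above by $v$.

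Applying intrinsic density $0$ to the computable permutation $\sigma^{-1}$ now gives $\abs{\sigma^{-1}(S)\upto 2k}/(2k) \to 0$. The injectivity of $p$ and $\sigma$, together with the inclusion above, yields $\abs{p^{-1}(S)\upto k} \leq \abs{\sigma^{-1}(S)\upto 2k}$, so $\abs{p^{-1}(S)\upto k}/k \to 0$ as required. The main obstacle is the surjectivity of $\sigma$: because $\omega \setminus p(\omega)$ need not be computably enumerable, no direct enumeration of the complement is available, and we must instead rely on the greedy ``least-unused'' rule at odd stages to guarantee that every natural number eventually enters the image.
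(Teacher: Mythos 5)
Your proof is correct: the interleaving construction does yield a computable permutation $\sigma$, the inclusion $\set{p(0),\ldots,p(k-1)}\subseteq\set{\sigma(0),\ldots,\sigma(2k-1)}$ holds by the induction you sketch, and the resulting bound $\abs{p^{-1}(S)\upto k}\le\abs{\sigma^{-1}(S)\upto 2k}$ (costing only a factor of $2$ in density) gives the forward direction, while the reverse direction is immediate as you say. The paper itself does not reprove this proposition --- it cites Corollary~4.3 of the author's earlier paper --- and your argument is essentially the standard one underlying that citation (extend a computable injection to a computable permutation by alternating its values with the least unused numbers).
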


We can weaken this slightly, instead asking only that no sampling succeeds with positive density in the limit (\ie, with positive lower density).
\begin{defn}
A set $S\subseteq\omega$ has \emph{intrinsic lower density 0} if
\begin{equation*}
\liminf_{n\to\infty}{\frac{\abs{p^{-1}(S)\upto n}}{n}}=0
\end{equation*}
for every computable injection $p:\omega\to\omega$.
\end{defn}

Both of these properties imply immunity, are weaker than cohesiveness, and are preserved under taking infinite subsets; that is to say, they are new immunity properties, fitting neatly into the classical hierarchy, as shown in Figure~\ref{fig:densityImmunityHierarchy}.

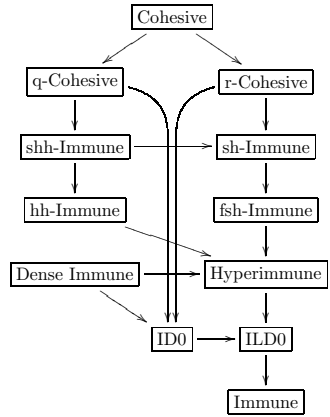
\begin{figure}
\centering
\resizebox{0.35\linewidth}{!}{
\xymatrix@R-8pt@C=-10pt{
	& *+[F]+\hbox{Cohesive} \ar[dl] \ar[dr] & \\
	*+[F]+\hbox{q-Cohesive} \ar[d] \ar@`{[0,1]-<0.5ex,0pt>,[1,1]-<0.5ex,0pt>,[2,1]-<0.5ex,0pt>}[4,1]!<-0.5ex,0pt> & & *+[F]+\hbox{r-Cohesive} \ar[d] \ar@`{[0,-1]+<0.5ex,0pt>,[1,-1]+<0.5ex,0pt>,[2,-1]+<0.5ex,0pt>}[4,-1]!<0.5ex,0pt>\\
	*+[F]+\hbox{shh-Immune} \ar[d] \ar[rr] & & *+[F]+\hbox{sh-Immune} \ar[d]\\
	*+[F]+\hbox{hh-Immune} \ar[drr] & & *+[F]+\hbox{fsh-Immune} \ar[d]\\
	*+[F]+\hbox{Dense Immune} \ar[dr] \ar[rr] & & *+[F]+\hbox{Hyperimmune} \ar[d]\\
	& *+[F]+\hbox{ID0} \ar[r] & *+[F]+\hbox{ILD0} \ar[d]\\
	& & *+[F]+\hbox{Immune}
	}
}
\caption{\cite{dirPaper} The graph of implications between the classical immunity properties and intrinsic density 0. Again, for $\Delta^0_2$ sets, shh-immunity and hh-immunity become equivalent; all other implications are as depicted. (We abbreviate intrinsic [lower] density 0 for infinite sets by I[L]D0.)}
\label{fig:densityImmunityHierarchy}
\end{figure}

In Section~\ref{sec:idComputability}, we establish tight lower bounds on the Turing degrees of sets with these new properties. By careful analysis of a classic construction of immune sets via introreducibility, we prove that sets with intrinsic lower density 0 exist in every non-computable Turing degree. By contrast, sets with intrinsic density 0 have more interesting computational content; we find that the Turing degrees of these sets are precisely those that are high ($\mathbf{a}'\geT\0''$) or compute a diagonally non-computable function ($\mathbf{a}\geT f$, where $f(e)\ne\phi_e(e)$ for every $e$). For this section, we assume familiarity with basic computability theory.

In Section~\ref{sec:idReverseMath}, we analyze the latter result from the perspective of reverse mathematics. (Naturally, this section will additionally assume the reader's familiarity with reverse mathematics, as formulated in \citet*{SOSOA}.) Formalizing our principles by means of a framework developed by \citet*{weaklyRepresented}, we find that our proofs regarding intrinsic density 0 all hold over $\mathbf{RCA}_0$, and so show that the existence of a set with intrinsic density 0 is equivalent to a disjunction of two more standard reverse-mathematical principles.

We adopt conventions typical in computability, generally following those established in \citet*{cta}. We refer to the set of natural numbers as $\omega$, and denote the $e$-th partial computable function (in some effective enumeration) by $\phi_e$ (while referring to the universal Turing functional as $\Phi$). We routinely identify a set $S\subseteq\omega$ with its characteristic function, as well as with the infinite binary sequence $(S(n))_{n\in\omega}$. We will also make common use of the notation $S\upto n$, to be read ``the set $S$ up to $n$'', and interpreted as $S\cap[0,n)$ --- most often identified with the $n$-bit prefix of the infinite binary sequence just discussed.

\section{Intrinsic density in the Turing degrees}\label{sec:idComputability}

As mentioned above, bounds (and particularly lower bounds) on the Turing degrees of immunity properties have historically yielded useful dividing lines in the sense of computational content; immunity matches with non-computability, hyperimmunity with escaping computable bounds, and cohesiveness with a more complex class of Turing degrees that, when restricted to the $\Delta^0_2$-setting, coincides with highness. The new entries in the hierarchy are no exception.

Recall that a set is immune if it has no infinite computable (or \ce{}) subset. One classic proof that every set is Turing equivalent to an immune set is as follows: consider a set $A$ as an infinite binary sequence, and let $S$ be the set of its finite prefixes (clearly Turing equivalent to $A$). $S$ is introreducible (\ie, computable from any infinite subset of itself), since if we are given infinitely many elements of $S$, we can compute every bit of $A$, and thus can recover $S$ in full. In particular, $S$ is immune if and only if $A$ is not computable; if $S$ were not immune, it would have an infinite computable subset, from which we could compute all of $S$ (and thus $A$).

With more careful analysis, we can in fact show the slightly stronger fact that $S$ has intrinsic lower density 0.

\begin{thm}
Let $S$ be the set of prefixes of a set $A$. If $A$ is not computable, then $S$ has intrinsic lower density 0.
\end{thm}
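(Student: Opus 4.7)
I would prove the contrapositive: suppose $S$ fails intrinsic lower density 0, witnessed by a computable injection $p\colon\omega\to\omega$ and some $\eps>0$ with $\abs{p^{-1}(S)\upto n}\geq\eps n$ for all sufficiently large $n$; we aim to show $A$ is computable from $p$. Writing $T_n\defeq\set{p(0),\dots,p(n-1)}$, injectivity of $p$ yields $\abs{T_n}=n$ and $\abs{T_n\cap S}\geq\eps n$, while $\abs{T_n\setminus S}\leq(1-\eps)n$. The key structural observation is that $S$, as the set of prefixes of $A$, is a chain under prefix extension, so $T_n\cap S$ is also a chain; since distinct prefixes of $A$ have distinct lengths, its longest element is a prefix of $A$ of length at least $\eps n-1$.

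The algorithmic core is to extract a long prefix of $A$ by a chain-counting argument. For $\tau\in T_n$ let $s(\tau)\defeq\abs{\set{k<n : p(k)\sqsubseteq\tau}}$, the number of $\tau$'s own prefixes appearing in $T_n$. If $\sigma$ denotes the longest prefix of $A$ in $T_n$, then $s(\sigma)=\abs{T_n\cap S}\geq\eps n$, so any maximizer $\tau^*$ of $s$ on $T_n$ also satisfies $s(\tau^*)\geq\eps n$ and $|\tau^*|\geq\eps n-1$. Letting $\mu$ be the longest common prefix of $\tau^*$ with $A$, the prefixes of $\tau^*$ in $T_n$ of length $\leq|\mu|$ lie in $S$ while those of length $>|\mu|$ lie in $T_n\setminus S$, yielding $s(\tau^*)\leq(|\mu|+1)+(1-\eps)n$ and hence $|\mu|\geq(2\eps-1)n-1$. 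When $\eps>1/2$ this grows with $n$, so the first $(2\eps-1)n-1$ bits of $\tau^*$ agree with $A$, giving a computable sequence of ever-longer prefixes of $A$ and so computing $A$ in the limit.

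The main obstacle is the regime $\eps\leq 1/2$, where the bound $(2\eps-1)n-1$ becomes vacuous and a ``rogue'' chain in $T_n\setminus S$ could rival $T_n\cap S$ in length. I would handle this by an inductive bootstrap that reduces to the solved $\eps>1/2$ case: having identified $A\upto m$ computably, restrict $p$ to the computable sub-enumeration of its outputs that extend $A\upto m$, and partition those outputs further by their value at position $m$. Only the correct branch ($A\upto m$ followed by $A(m)$) admits extensions by prefixes of $A$ of arbitrary length; the other branch contains no $S$-prefix past length $m$, so within the correct branch the prefixes of $A$ concentrate at an effectively larger share of the density and one hopes to reduce to the $\eps>1/2$ regime. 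Making this step rigorous—in particular, ruling out that the wrong branch's sub-enumeration itself supports a long rogue chain, and identifying the correct branch by a uniformly effective test at each stage—is the central technical challenge and where I expect the ``more careful analysis'' of the proof to live.
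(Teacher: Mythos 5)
Your argument for the regime $\eps>1/2$ is correct, but the general case $\eps\le 1/2$ --- which is the typical one, since the hypothesis only gives \emph{some} positive lower density $\eps=1/q$ --- is left unproved, and the bootstrap you sketch does not close it. Two problems: first, restricting $p$ to the outputs extending $A\upto m$ (or extending the ``correct branch'' $A\upto(m+1)$) need not raise the relative density of $S$ at all, since the adversarial non-$S$ outputs may \emph{already} all extend the correct prefix of $A$; for example, $p$ could emit $\eps n$ prefixes of $A$ together with $(1-\eps)n$ prefixes of some fixed non-computable $B$ agreeing with $A$ far past $m$, in which case the restricted enumeration has exactly the same density profile and the maximizer $\tau^*$ may still lie on the rogue chain. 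Second, identifying $A\upto m$ and the bit $A(m)$ ``computably'' is precisely the problem you are trying to solve, so the bootstrap is circular: when $\eps\le 1/2$ there is no effective test distinguishing the true chain from a rogue chain of comparable length.

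The missing idea is to give up on isolating a \emph{single} candidate and instead retain \emph{all} strings with many extensions among the outputs: the paper declares $\sigma$ of length $n$ to be on a tree $T$ when $\phi_e([0,2qn))$ contains at least $n$ extensions of $\sigma$. Since extensions of distinct strings of the same length are disjoint, at most $2q$ strings survive at each level, so $T$ is a computable tree of bounded width; the density hypothesis guarantees $A$ is a path of $T$ (at least $2n$ of the first $2qn$ outputs are prefixes of $A$, hence at least $n$ extend $A\upto n$); and every path of a computable bounded-width tree is isolated and hence computable. Your rogue chains then simply become other branches of $T$ and need never be distinguished from $A$ online, which is exactly what defeats the $\eps\le 1/2$ obstruction your approach runs into.
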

\begin{proof}
Suppose $S=\setbuildc{A\upto n}{n\in\N}$, the set of prefixes of $A$, does not have intrinsic lower density 0. By definition, there exist integers $q$ and $N$, and some total computable injection $\phi_e$, such that $\rho_n(\phi_e^{-1}(S))>\frac{1}{q}$ for all $n>N$.

Given $e$, $q$, and $N$, we construct the computable binary tree $T$ as follows:

$T$ begins as a full tree up to height $N$. For strings $\sigma$ of length $n>N$, we put $\sigma$ into $T$ if and only if its prefixes are in $T$ and $\phi_e(\left[0,2qn\right))$ contains at least $n$ strings extending $\sigma$.
As no two distinct strings of the same length can share an extension, and since $\sigma\in T$ implies that $\phi_e(\left[0,2q\abs{\sigma}\right))$ contains at least $\abs{\sigma}$ extensions of $\sigma$, we see that $T$ has width at most $2q$ at all heights $n>N$.

By assumption, $\phi_e$ samples the prefixes of $A$ with partial density always exceeding $\frac{1}{q}$ beyond a point $N$; therefore, for $n>N$, $\phi([0,2qn))$ must contain at least $2n$ prefixes of $A$, and so must include at least $n$ extensions of $A\upto n$. Thus, $A$ must be a path on $T$.

Since $T$ is a computable tree with bounded width, $A$ is computable.
\end{proof}

By Corollary~1.4 of \cite{dirPaper}, no infinite computable set can have intrinsic lower density 0; therefore,

\begin{cor}
The Turing degrees containing an infinite set of intrinsic lower density 0 (or, taking the complement, a co-infinite set of intrinsic upper density 1) are precisely the non-computable degrees.
\end{cor}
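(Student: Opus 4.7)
The plan is to obtain the corollary as an essentially immediate consequence of the preceding theorem together with Corollary~1.4 of \cite{dirPaper}. I would break it into the two containments implicit in ``precisely.''

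For the forward direction (every non-computable degree contains an infinite set of intrinsic lower density~0), I would take an arbitrary non-computable $A$, and consider its set of prefixes $S = \setbuildc{A\upto n}{n\in\N}$, viewed as a subset of $\omega$ via a standard effective coding of finite binary strings. This $S$ is Turing-equivalent to $A$: each prefix of $A$ computes $A$ up to its length, and from $A$ we can clearly list all prefixes. Hence $S$ lies in the Turing degree of $A$, is infinite, and by the preceding theorem has intrinsic lower density~0.

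For the reverse direction (no infinite set of intrinsic lower density~0 lies in the computable degree), I would directly cite Corollary~1.4 of \cite{dirPaper}, which states that no infinite computable set has intrinsic lower density~0 --- one sees this because the identity permutation itself samples a computable set at its asymptotic density, which is positive whenever the set is infinite with sufficiently regular structure, and the full statement from \cite{dirPaper} covers the general case.

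For the parenthetical dual formulation, the key observation is that for any computable injection $p:\omega\to\omega$, the identity $\abs{p^{-1}(S)\upto n}+\abs{p^{-1}(\omega\setminus S)\upto n}=n$ holds, so that dividing by $n$ gives
\begin{equation*}
\liminf_{n\to\infty}\frac{\abs{p^{-1}(S)\upto n}}{n}=1-\limsup_{n\to\infty}\frac{\abs{p^{-1}(\omega\setminus S)\upto n}}{n}.
\end{equation*}
Thus $S$ has intrinsic lower density~0 iff $\omega\setminus S$ has intrinsic upper density~1; since complementation preserves Turing degree and interchanges ``infinite'' with ``co-infinite,'' the two formulations coincide. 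There is no real obstacle here --- the preceding theorem does all the work, and the only care required is the density-complement identity just noted.
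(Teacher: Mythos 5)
Your proposal is correct and matches the paper's (very terse) proof: the forward direction is exactly the preceding theorem applied to the prefix set of a non-computable $A$, and the reverse direction is exactly the citation of Corollary~1.4 of \cite{dirPaper}; the complement/upper-density identity you note is the standard justification for the parenthetical. One minor caution: your heuristic that ``the identity permutation samples a computable set at positive density'' is not the right intuition (an infinite computable set can have lower density $0$ under the identity --- the cited corollary instead uses a computable injection enumerating the set, which pulls it back to all of $\omega$), but since you defer to the cited result this does not affect the argument.
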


As for intrinsic density 0, previous work by the author \cite{dirPaper} has already established some upper bounds on the information content required to compute such sets. As every r-cohesive set has intrinsic density 0, these sets exist in every cohesive degree, and in particular in every high degree. Moreover, sets with intrinsic density 0 can be computed from any 1-random, and thus from any 1-random degree. These bounds (taken together) prove to be nearly tight.

To refine these bounds, we rely on a characterization by \citet*{complexityRecursion} of the degrees of eventually-different functions (to be discussed more in Section~\ref{sec:idReverseMath}). Here, we focus on the complementary notion, under the name introduced in \citet*{dhBook}.

\begin{defn}
A set $A$ is \emph{weakly computably traceable} if there is a computable function $h$ such that for all $f\leT A$, there is a computable sequence of finite sets $V_n$ with $\abs{V_n}\le h(n)$ for all $n$ and $f(n)\in V_n$ for infinitely many $n$; that is, if we can infinitely often guess the value of $f(n)$ using a computable guessing strategy limited to at most $h(n)$ guesses.
\end{defn}

By Theorem~5.1 of \citet*{complexityRecursion}, a set $A$ is weakly computably traceable (WCT) iff it has neither high nor DNC degree; that is, iff it computes neither a dominant function \cite{martin66} nor a diagonally non-computable function. Thus, weak computable traceability is a property of Turing degrees, expressing computability-theoretic weakness.

Since all 1-random sets compute a DNC function, and all r-cohesive sets have either high or DNC degree \cite{nonhighCohesive}, all of our prior constructions of a set with intrinsic density 0 were built below non-WCT sets. We can now show that this was no coincidence.

\begin{thm}\label{thm:wctDensity}
Every infinite set $A$ that is weakly computably traceable has upper density 1 under some computable sampling, and thus has absolute upper density 1.
\end{thm}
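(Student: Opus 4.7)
The plan is to use the WCT witness $h$ of $A$ to construct a computable injection $p\colon\omega\to\omega$ whose preimage of $A$ has upper density $1$. Let $g\colon\omega\to\omega$ enumerate $A$ in increasing order, so $g\leT A$. The natural $A$-computable function to trace is one that encodes a long block of consecutive elements of $A$ past a controllable threshold: for parameters $M_n, B_n$ to be chosen, set
\[
F(n) = \langle g(M_n),\, g(M_n+1),\, \ldots,\, g(M_n + B_n - 1)\rangle.
\]
By WCT there is a computable sequence $(V_n)$ with $\abs{V_n}\leq h(n)$ and $F(n)\in V_n$ for infinitely many $n$; on each such $n$, one element of $V_n$ decodes to a $B_n$-tuple of numbers all lying in $A$.

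Given these traces, I would construct $p$ in stages. In stage $n$, enumerate in some fixed order the decoded $B_n$-tuples from each of the at most $h(n)$ candidates in $V_n$, so that stage $n$ contributes at most $h(n)\cdot B_n$ values to the range of $p$, of which at least $B_n$ lie in $A$ whenever the trace succeeds. Let $S_{n-1}\defeq\sum_{j<n}h(j)B_j$ be the total length of all prior stages, and choose $B_n$ so that $S_{n-1}/B_n\to 0$. If the trace succeeds at stage $n$ and the correct candidate occupies position $i$ in the enumeration order, then at the checkpoint just after this candidate the cumulative density of $p^{-1}(A)$ is at least $B_n/(S_{n-1}+iB_n)$, which tends to $1/i$ as $n\to\infty$.

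The central difficulty is that a priori the correct candidate's position $i$ within stage $n$ may be as large as $h(n)$, so naively we only obtain $\limsup$-density $\geq 1/h(n)$ rather than $1$. To push the density arbitrarily close to $1$, I would first invoke the standard fact --- provable by a tupling and reindexing argument --- that WCT with any witness is equivalent to tracing with any computable unbounded bound, letting us take the bound to grow arbitrarily slowly. Then, to ensure small $i$ occurs infinitely often, I would include in each stage a separate sub-enumeration for every possible candidate position (in effect guessing where the correct tuple sits in $V_n$), and argue by pigeonhole over the infinitely many successful stages that the right guess is made infinitely often; the combinatorial balancing of these competing sub-enumerations so that the checkpoints give density close to $1$ --- rather than being diluted by the alternative guesses --- is the main technical crux I expect. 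Finally, a standard diagonalization combining the constructions for each target $\eps = 1/k$ yields a single computable $p$ with $\limsup_n\abs{p^{-1}(A)\cap[0,n)}/n = 1$, and hence absolute upper density $1$.
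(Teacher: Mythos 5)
Your construction skeleton---trace long blocks of consecutive elements of $A$, build a computable injection in stages whose lengths grow fast enough that $S_{n-1}/B_n\to 0$, and read off the density at a checkpoint inside each successful stage---is essentially the paper's, which uses the blocks $[(n-1)!,n!)$. But the difficulty you flag as ``the main technical crux I expect'' is a genuine unresolved gap, and neither fix you sketch closes it. Pigeonholing on the position of the correct candidate requires the number of positions to be bounded, whereas $h(n)$ is unbounded; and weakening the bound to an arbitrarily slowly growing but still unbounded order (which is indeed what tupling-and-reindexing gives) does not restore the pigeonhole. Devoting a sub-enumeration to each possible position does not obviously help either: all candidates in $V_n$ decode to blocks of the same length $B_n$, so the sub-block for guess $i$ is diluted by the $i-1$ sub-blocks enumerated before it, and you are back to density roughly $1/i$.

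The paper sidesteps the multiplicity problem entirely by using the Kjos-Hanssen--Merkle--Stephan equivalence in its strongest form: $A$ is weakly computably traceable iff every $f\leT A$ is infinitely often \emph{equal} to a single total computable function $h$, i.e., a trace of size $1$. With $f(n)=A\upto p_A(n!)$ and such an $h$, one defines $g(j)$ for $j\in[(n-1)!,n!)$ to be the position of the $j$-th $1$ in the string $h(n)$ (patching collisions with fresh least values to keep $g$ injective); on each of the infinitely many $n$ with $h(n)=f(n)$, the set $g([0,n!))$ contains at least $n!-(n-1)!$ elements of $A$, so the sampled density at $n!$ is at least $1-\frac{1}{n}$. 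If you insist on working with size-$h(n)$ traces directly, you would need something like sub-block lengths $L_{n,1}\ll\cdots\ll L_{n,h(n)}$ within each stage, each dominating the total length of everything enumerated before it---but the clean repair is simply to invoke the size-$1$ form of the equivalence, which is the ``infinitely often equal'' characterization complementary to eventual difference.
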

\begin{proof}
By the same equivalence of \citet*{complexityRecursion}, $A$ is weakly computably traceable iff for all $f\le_T A$, there is a total computable function $h$ such that $h(n)=f(n)$ for infinitely many $n$.

Let $f(n)$ code $p_A(j)$ for all $j<n!$; specifically, take $f(n)=A\upto p_A(n!)$. Clearly $f\le_T A$, so there is a total computable $h$ with $h(n)=f(n)$ for infinitely many $n$.

We define a total computable injection $g$ by assigning values $g(j)$ in increasing order of $j$. If $j\in\left[(n-1)!,n!\right)$, define $g(j)$ to be the position of the $j$-th 1 in the string $h(n)$, unless this value is already assigned to some $g(i)$ with $i<j$; in that case, we instead define $g(j)$ to be the least value not assigned to any earlier $g(i)$.

For any $n$ where $h(n)=f(n)$, we then have $g(j)\in A$ for all $j\in\left[(n-1)!,n!\right)$, unless the requisite value was already assigned at that stage. In any event, $g(\left[0,n!\right))$ contains at least $n!-(n-1)!$ elements of $A$, so $\rho_{n!}(g^{-1}(A))\ge 1-\frac{1}{n}$. Since this occurs for infinitely many $n$, we conclude that $g$ samples $A$ with upper density 1, and thus (by Lemma~4.2 of \cite*{dirPaper}) that $A$ has absolute upper density 1.
\end{proof}

\begin{cor}\label{cor:wctAbsolute}
If a set $A$ is infinite, co-infinite, and weakly computably traceable, then $A$ has absolute upper density 1 and absolute lower density 0.
\end{cor}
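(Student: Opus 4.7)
The plan is to apply Theorem~\ref{thm:wctDensity} twice: once directly to $A$, and once to its complement $\overline{A}$. The first application requires no new work and immediately gives that $A$ has absolute upper density $1$. For the second application, the key observation is that weak computable traceability is a property of the Turing degree, so since $A \equivT \overline{A}$, the complement $\overline{A}$ is also WCT; and since $A$ is co-infinite, $\overline{A}$ is infinite. Theorem~\ref{thm:wctDensity} therefore produces a computable injection $g$ for which $\limsup_n \rho_n(g^{-1}(\overline{A})) = 1$.

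The second step is to convert this into an absolute-lower-density statement about $A$. Because $g$ is injective, $g^{-1}(\overline{A})$ and $g^{-1}(A)$ partition $\omega$, so $\rho_n(g^{-1}(\overline{A})) = 1 - \rho_n(g^{-1}(A))$ for every $n$. Taking the $\liminf$ of both sides yields $\liminf_n \rho_n(g^{-1}(A)) = 0$, so $g$ samples $A$ with lower density $0$; Lemma~4.2 of \cite{dirPaper}, invoked exactly as in the proof of Theorem~\ref{thm:wctDensity}, then lifts this to absolute lower density $0$.

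I expect essentially no obstacle here; the whole corollary is a direct consequence once one notes that WCT passes to the complement. The only point worth mentioning explicitly is why this closure holds, which is immediate from the definition: every $f \leT \overline{A}$ is also $\leT A$, so the same computable bound $h$ that witnesses WCTness for $A$ witnesses it for $\overline{A}$.
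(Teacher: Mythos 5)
Your proposal is correct and follows exactly the paper's proof, which is the one-line ``Apply Theorem~\ref{thm:wctDensity} to both $A$ and $\overline{A}$''; you have merely filled in the routine details (that WCT passes to $\overline{A}$, and that upper density $1$ for $\overline{A}$ under a sampling $g$ translates to lower density $0$ for $A$ under the same $g$). No issues.
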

\begin{proof}
Apply Theorem~\ref{thm:wctDensity} to both $A$ and $\overline{A}$.
\end{proof}

By generalizing our prior constructions, we can prove this new bound tight.

\begin{thm}\label{thm:id0Construction}
Every set $A$ that is not weakly computably traceable computes a set with intrinsic density 0.
\end{thm}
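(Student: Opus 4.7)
The plan is to exploit the characterization recalled just above the theorem -- that $A$ is not weakly computably traceable iff $A$ is high or of DNC degree -- to extract from $A$ a function $f \leT A$ that is eventually different from every partial computable function, and then to encode $f$ into a sparse set
\[
S \;:=\; \braces{\abrack{n, f(n)} : n \in \omega}
\]
via a fixed computable pairing $\abrack{\cdot, \cdot} : \omega \times \omega \to \omega$. Clearly $S \leT A$ and $S$ is infinite; the work lies in showing that $S$ has intrinsic density 0.

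First I would argue that non-WCT yields an $f \leT A$ with the following strong eventual-difference property: for every partial computable $\phi$, $f(n) \ne \phi(n)$ for almost all $n$ in the domain of $\phi$. In the high case, take $f$ to be a dominating function, adjusted via a time-bounded diagonalization so that it avoids every partial computable value on its domain almost everywhere; in the DNC case, a recursion-theoretic construction using a DNC function yields such an $f$ directly. Equivalently, $f$ escapes every uniformly c.e.\ bounded trace.

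Next, fix a computable injection $p$ and write $p(m) = \abrack{a(m), b(m)}$ for computable $a, b$. Then $m \in p^{-1}(S)$ iff $b(m) = f(a(m))$, and the injectivity of $p$ forces $b$ to be injective on each $a^{-1}(n)$, so
\[
\abs*{p^{-1}(S) \cap [0, N)} \;=\; \abs*{\braces{n : f(n) \in V_n^{(N)}}},
\]
where $V_n^{(N)} := b\paren{a^{-1}(n) \cap [0, N)}$ and $\sum_n \abs{V_n^{(N)}} = N$. Suppose for contradiction that this count is at least $\epsilon N$ for infinitely many $N$, and put $K := \ceil{2/\epsilon}$. At most $N/K \le \epsilon N / 2$ indices $n$ can satisfy $\abs{V_n^{(N)}} > K$, so at least $\epsilon N / 2$ of the counted $n$ are ``small'' (with $\abs{V_n^{(N)}} \le K$) and have $f(n) \in V_n^{(N)}$. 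For each such small $n$, $f(n) = b(m_i(n))$ where $m_i(n)$ denotes the $i$-th element of $a^{-1}(n)$, for some $i \le K$. The maps $\phi_i(n) := b(m_i(n))$ (for $i = 1, \dots, K$) are partial computable, and pigeonhole over $i$ produces one of them agreeing with $f$ on an infinite subset of its domain -- contradicting the strong ED property of $f$.

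The main obstacle I anticipate is the first step: upgrading the total-ED characterization invoked in the proof of Theorem~\ref{thm:wctDensity} to a partial-ED statement, which is what the pigeonhole step genuinely requires. This upgrade is standard but non-trivial (the DNC case uses the recursion theorem; the high case uses time-bounded diagonalization of a dominating function). Once this $f$ is in hand, the counting and pigeonhole steps above are routine.
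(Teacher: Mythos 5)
Your counting and pigeonhole steps are fine (and close in spirit to the paper's argument), but the first step --- extracting from a non--weakly-computably-traceable $A$ a function $f\leT A$ that is eventually different from \emph{every partial} computable function on its domain --- is not merely ``standard but non-trivial''; it is false in the high case. Any such strongly eventually different $f$ computes a DNC function: apply the hypothesis to the partial computable diagonal $n\mapsto\phi_n(n)$ to get $f(n)\ne\phi_n(n)$ for all but finitely many $n$ with $\phi_n(n)\downarrow$, and hard-code corrections at the finitely many exceptions. But there are high degrees that are not DNC (e.g., any incomplete high \ce{} degree, by Arslanov's completeness criterion, which the paper itself invokes), and these degrees are not weakly computably traceable yet cannot compute your $f$. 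So the reduction you propose cannot cover all non-WCT sets, and no ``time-bounded diagonalization'' of a dominating function can rescue it.

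The paper avoids this by working directly with the trace formulation of non-WCT: it takes $f\leT A$ escaping every computable trace of size $h(n)=n^2$, and from a hypothetical positive-density sampling of the graph $G_f$ it builds the manifestly computable trace $D_{g(n)}=\setbuildc{y}{\abrack{x,y}\in\pi([0,(n+1)q))}$ of size at most $(n+1)q\le n^2$, using the observation that among $\ge s/q$ pairs $\abrack{m,f(m)}$ in $\pi([0,s))$ some $m$ must be $\ge s/q-1$. Note that even if you weaken your requirement on $f$ to ``escapes all computable traces of constant bound $K$'' (which non-WCT does supply), your selector functions $\phi_i(n)=b(m_i(n))$ have only c.e.\ domains, so the trace they define ($W_n$ = first $K$ elements of $b(a^{-1}(n))$) is a bounded c.e.\ trace rather than a computable one; closing that gap needs another piece of the Kjos-Hanssen--Merkle--Stephan equivalence, whereas the paper's choice of trace is the image of a finite set under a total computable function and hence computable outright.
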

\begin{proof}
By definition, $A$ is not weakly computably traceable iff for all computable orders $h$, there is some $f\le_T A$ such that for no computable function $g(n)$ do we have $\abs{D_{g(n}}\le h(n)$ for all $n$ and $f(n)\in D_{g(n)}$ infinitely often.

Take $h(n)=n^2$ (or, indeed, any computable superlinear function), and let $f\le_T A$ be as above. We claim that $G_f=\setbuildc{\abrack{n,f(n)}}{n\in\N}$, the graph of $f$, has intrinsic density 0.

Suppose not; in particular, suppose that $G_f$ has upper density greater than $\frac{1}{q}$ under some computable permutation $\pi$. Thus, $G_f$ has partial density exceeding $\frac{1}{q}$ in the first $s$ positions for infinitely many $s$. For such $s$, we have that $\pi(\left[0,s\right))$ contains at least $\frac{s}{q}$ elements of $G_f$, and thus must contain $\abrack{m,f(m)}$ for some $m\ge\frac{s}{q}-1$. Therefore, for infinitely many $m$, we have that $\pi(\left[0,(m+1)q\right))$ contains $\abrack{m,f(m)}$.

For all $n$, define $D_{g'(n)}=\setbuildc{y}{\abrack{x,y}\in\pi(\left[0,(n+1)q\right))}$. For all sufficiently large $n$, $h(n)\ge(n+1)q$; thus, there is a computable function $g$ such that $D_{g(n)}\le h(n)$ for all $n$ and $g(n)=g'(n)$ for all sufficiently large $n$.

However, as noted above, $\pi(\left[0,(m+1)q\right))$ contains $\abrack{m,f(m)}$ for infinitely many $m$. As this implies that $f(m)\in D_{g(m)}$ for infinitely many $m$, this contradicts our choice of $f$. Therefore, we conclude that $G_f$ has intrinsic density 0.
\end{proof}

By a remarkable result of \citet*{upwardClosure}, the class of Turing degrees containing infinite sets of intrinsic density 0 is upwards closed; we need only note that there are arithmetic sets of intrinsic density 0, and that intrinsic density 0 is preserved under taking subsets. Combining this observation with Corollary~\ref{cor:wctAbsolute} and Theorem~\ref{thm:id0Construction}, we obtain the following corollary:

\begin{cor}
The Turing degrees containing an infinite set of intrinsic density 0 are precisely those that are not weakly computably traceable; that is, those that are either high or DNC.
\end{cor}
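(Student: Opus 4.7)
The plan is to glue together the two main theorems of this section with the upward closure observation of the previous paragraph, using the equivalence ``a degree is WCT iff it is neither high nor DNC'' from Theorem~5.1 of \citet*{complexityRecursion}.

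For the forward direction, suppose $\mathbf{a}$ is non-WCT (equivalently, high or DNC), and fix any $A$ of degree $\mathbf{a}$. Theorem~\ref{thm:id0Construction} produces a set $B\leT A$ with intrinsic density 0, so the degree $\mathbf{b}$ of $B$ satisfies $\mathbf{b}\leT\mathbf{a}$ and contains an infinite set of intrinsic density 0. The upward closure result of \citet*{upwardClosure} cited just above then lifts this property from $\mathbf{b}$ up to $\mathbf{a}$.

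For the reverse direction, suppose $\mathbf{a}$ is WCT and, for contradiction, that some infinite $A$ of degree $\mathbf{a}$ has intrinsic density 0. Since the identity permutation is computable, intrinsic density 0 immediately implies that the ordinary upper density of $A$ is 0; in particular, $A$ is co-infinite. But then $A$ is infinite, co-infinite, and weakly computably traceable, so by Corollary~\ref{cor:wctAbsolute} it has absolute upper density 1, a contradiction.

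The hard work has already been done in Theorems~\ref{thm:wctDensity} and~\ref{thm:id0Construction}, so there is no significant obstacle remaining; the one genuine subtlety is the appeal to upward closure, which is needed because Theorem~\ref{thm:id0Construction} guarantees only that $\mathbf{a}$ computes some intrinsic density 0 set rather than itself containing one.
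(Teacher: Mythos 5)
Your proposal is correct and matches the paper's own argument: the paper likewise combines Theorem~\ref{thm:id0Construction} with the upward-closure result to get the non-WCT direction, and uses Corollary~\ref{cor:wctAbsolute} (equivalently, Theorem~\ref{thm:wctDensity}, which doesn't even need the co-infiniteness step you supply) for the converse. You have also correctly identified the one subtle point, namely that upward closure is what converts ``computes a set of intrinsic density 0'' into ``contains one.''
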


Since all weakly computably traceable sets have absolute upper density 1 by Theorem~\ref{thm:wctDensity}, this gives a 0-1 law for absolute upper density:

\begin{cor}
A Turing degree contains no set with absolute upper density 0 iff all its sets have absolute upper density 1.
\end{cor}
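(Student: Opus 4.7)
The plan is to derive this as an immediate corollary of Theorem~\ref{thm:wctDensity} combined with the preceding characterization of degrees admitting an intrinsic-density-0 set. The key observation is that ``absolute upper density $0$'' is exactly ``intrinsic density $0$'': the supremum over computable samplings $p$ of $\limsup_n |p^{-1}(A)\upto n|/n$ equals $0$ iff each such $\limsup$ equals $0$, and (since the lower density is always nonnegative) this is in turn the definition of intrinsic density~$0$.

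For the forward direction, I would assume $\mathbf{a}$ contains no set with absolute upper density~$0$ and conclude that every set in $\mathbf{a}$ has absolute upper density~$1$. The case $\mathbf{a} = \mathbf{0}$ is vacuous, since $\emptyset \in \mathbf{0}$ has absolute upper density~$0$; so $\mathbf{a}$ is non-computable, and every $B \in \mathbf{a}$ is infinite (and co-infinite). By the preceding corollary, the absence of an intrinsic-density-$0$ set in $\mathbf{a}$ forces $\mathbf{a}$ to be weakly computably traceable. Since WCT is a Turing-degree property, every $B \in \mathbf{a}$ is WCT, and Theorem~\ref{thm:wctDensity} applied to each such infinite $B$ yields absolute upper density~$1$.

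The reverse direction is immediate: if every set in $\mathbf{a}$ has absolute upper density~$1$, then none has absolute upper density~$0$. There is no serious obstacle in the argument and no new construction is required — all the real work was done in Theorem~\ref{thm:wctDensity} and the preceding corollary. The only bookkeeping points are the translation between ``absolute upper density $0$'' and ``intrinsic density $0$'' and the cosmetic handling of the computable degree.
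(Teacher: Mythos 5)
Your proposal is correct and follows exactly the argument the paper intends (the paper leaves the proof implicit in the sentence preceding the corollary): Theorem~\ref{thm:wctDensity} handles the WCT degrees, the preceding corollary handles the non-WCT ones, and the computable degree is vacuous. The identification of ``absolute upper density $0$'' with intrinsic density $0$ and the observation that every set in a non-computable degree is infinite are exactly the right bookkeeping points.
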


In fact, our results are slightly broader than stated above, since Corollary~\ref{cor:wctAbsolute} states that no non-trivial weakly computably traceable set has any defined intrinsic density.

\begin{cor}\label{cor:idLowerBound}
The Turing degrees containing infinite co-infinite sets with defined intrinsic density are precisely those that are not weakly computably traceable; that is, those that are either high or DNC.
\end{cor}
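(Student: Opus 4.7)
The plan is to derive Corollary~\ref{cor:idLowerBound} by combining the preceding corollary (which characterizes the degrees containing an infinite set of intrinsic density 0) with Corollary~\ref{cor:wctAbsolute}, with only one small extra observation to bridge the two.

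For the forward direction, suppose the degree $\mathbf{a}$ is not weakly computably traceable. The preceding corollary gives an infinite set $A \leT \mathbf{a}$ of intrinsic density 0. I would then observe that any infinite set with intrinsic density 0 is automatically co-infinite: intrinsic density 0 implies asymptotic density 0 (taking $p$ to be the identity permutation), which means the complement has asymptotic density 1 and is therefore infinite. Since intrinsic density 0 is a defined intrinsic density, $A$ witnesses the claim.

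For the reverse direction, suppose $\mathbf{a}$ is weakly computably traceable, and let $A \leT \mathbf{a}$ be infinite and co-infinite. By Corollary~\ref{cor:wctAbsolute}, $A$ has absolute upper density 1 and absolute lower density 0. Unpacking what this means (via Theorem~\ref{thm:wctDensity} applied to $A$ and to $\overline{A}$), there is a computable injection $p$ with $\limsup_{n\to\infty} \rho_n(p^{-1}(A)) = 1$ and a computable injection $q$ with $\liminf_{n\to\infty} \rho_n(q^{-1}(A)) = 0$. For $A$ to have defined intrinsic density $\alpha$, the sampling sequences under both $p$ and $q$ must converge to $\alpha$; but convergence of the first sequence forces $\alpha = 1$, while convergence of the second forces $\alpha = 0$, a contradiction. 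Hence $A$ has no defined intrinsic density.

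There is no real obstacle: the two halves each unwind directly from results already on the page. The only substantive point of care is the small observation in the forward direction that a set of intrinsic density 0 is automatically co-infinite, and, in the reverse direction, the explicit recognition that ``absolute upper density 1 together with absolute lower density 0'' is precisely what rules out the existence of any common limit value across all computable samplings.
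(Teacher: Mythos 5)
Your proposal is correct and matches the paper's (essentially implicit) argument: the paper derives this corollary precisely by combining the preceding characterization of the degrees of infinite intrinsic-density-0 sets with the remark that, by Corollary~\ref{cor:wctAbsolute}, no infinite co-infinite weakly computably traceable set has a defined intrinsic density. Your two small bridging observations (an infinite ID0 set is automatically co-infinite, and upper density 1 together with lower density 0 under computable samplings rules out any common limit) are exactly the glue the paper leaves tacit.
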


By Arslanov's completeness criterion, any DNC \ce{} set is in fact Turing\hyp{}equivalent to $\0'$; therefore, each non-high co-infinite \ce{} set has absolute lower density 0, as its complement is weakly computably traceable.

On the other hand, since there is a dense simple (in fact, maximal) set in every high \ce{} degree \cite{martin66}, every high \ce{} degree contains a \ce{} set with intrinsic density 1. \cite{dirPaper} Therefore, the Turing degrees computing a co-infinite \ce{} set of intrinsic density 1 are precisely the high \ce{} degrees.

Since there are non-high hypersimple sets (in fact, every non-computable \ce{} degree contains a hypersimple set), this answers an open question from \cite{dirPaper}:

\begin{cor}\label{cor:hypersimpleLD}
There is a hypersimple set with lower density 0.
\end{cor}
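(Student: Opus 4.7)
The plan is to assemble two classical constructions with the WCT dichotomy already established in this section. First, every non-computable c.e.\ degree contains a hypersimple set --- for instance, via the Dekker deficiency-set construction applied to any one-one enumeration of a non-computable c.e.\ member of the degree. Second, there exist non-high non-computable c.e.\ degrees --- for instance, any non-computable low c.e.\ degree produced by a Sacks-style construction. Together these guarantee a hypersimple set $A$ whose degree is c.e., non-computable, and non-high.

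Having $A$ in hand, I would next argue that its degree is weakly computably traceable. Since $A$ is c.e.\ and of non-high degree, Arslanov's completeness criterion --- which forces any c.e.\ set of DNC degree to be $\equivT \mathbf{0}'$, and hence high --- rules out the DNC alternative. So $A$'s degree is neither high nor DNC, and is therefore WCT. Since $A$ is hypersimple it is automatically infinite and co-infinite, so Corollary~\ref{cor:wctAbsolute} applies and yields the desired conclusion: $A$ is a hypersimple set with (absolute) lower density $0$, exactly as in the preceding paragraph's observation about non-high co-infinite c.e.\ sets.

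The main obstacle, such as it is, is purely bookkeeping: pairing two classical c.e.-degree constructions so that a single set simultaneously witnesses both properties. The density-theoretic content is already packaged into the WCT dichotomy, so no further analytic work is needed, and the corollary reduces to a direct assembly of that dichotomy with standard facts about the c.e.\ degrees.
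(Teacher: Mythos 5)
Your approach is the same as the paper's: locate a hypersimple set in a non-high non-computable c.e.\ degree, use Arslanov's completeness criterion to rule out the DNC alternative, conclude the degree is weakly computably traceable, and invoke Corollary~\ref{cor:wctAbsolute}. The extra detail you supply (Dekker deficiency sets plus the existence of low c.e.\ degrees) is exactly what the paper leaves implicit when it asserts that non-high hypersimple sets exist.

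There is, however, one missing step at the end. Corollary~\ref{cor:wctAbsolute} gives you that $A$ has \emph{absolute} lower density 0, i.e.\ that $A$ has lower density 0 under some computable sampling --- not that $A$ itself has lower density 0. Since the identity is among the computable permutations, absolute lower density is a lower bound for the plain lower density, so the implication you want does not go in the direction you need: $A$ itself could well have positive (even full) lower density while still having absolute lower density 0. The corollary as stated asks for a hypersimple set whose ordinary lower density is 0, so you must convert the sampling witness into an actual set. The paper does this by passing to the image of $A$ under the computable permutation furnished by Lemma~4.2 of the earlier paper, and then observing that hypersimplicity is computably invariant, so that the permuted set is still hypersimple and now has plain lower density 0. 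Adding that one sentence closes the gap; everything else in your argument is sound.
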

\begin{proof}
Let $A$ be any non-high hypersimple set. Since $A$ is \ce{} and non-high, it is weakly computably traceable, and therefore has lower density 0 under some computable sampling. Taking the image of $A$ under the computable permutation mentioned in Lemma~4.2 of \cite*{dirPaper}, and noting that hypersimplicity is computably invariant, we obtain a hypersimple set with lower density 0.
\end{proof}

In this section, we have characterized the Turing degrees of sets with intrinsic density 0, and (in Corollary~\ref{cor:idLowerBound}) noted that of all sets with well-defined intrinsic density, these are the ``easiest'' compute. However, this only provides a lower bound on the Turing degrees of sets with intrinsic density intermediate between 0 and 1, leaving us with an open question:

\begin{openQuestion}\label{q:intermediateIDdegrees}
What are the Turing degrees of sets with intermediate intrinsic density (for concreteness, density~$\frac{1}{2}$)?
\end{openQuestion}

As shown in \citet*{dirPaper}, intrinsic density~$\frac{1}{2}$ is notable as a weak notion of randomness, coinciding with permutation and injective stochasticity (by analogy to the permutation and injective randomness of \citet*{permutationRandomness}).

Schnorr randomness provides an upper bound: as all Schnorr random sets have density~$\frac{1}{2}$, and Schnorr randomness is computably invariant, every Schnorr random must have intrinsic density~$\frac{1}{2}$. By \citet*{randomDegrees}, Schnorr random sets exist in every high or 1-random degree. This leaves us with a gap. Corollary~\ref{cor:idLowerBound} tells us that every non-high set with intrinsic density~$\frac{1}{2}$ computes a DNC function, but:

\begin{openQuestion}
Does every non-high set with intrinsic density~$\frac{1}{2}$ compute a 1-random set?
\end{openQuestion}

A positive answer to this would resolve Question~\ref{q:intermediateIDdegrees}, showing that the degrees of sets with intrinsic density~$\frac{1}{2}$ coincide with the Schnorr random degrees. (We should note that not every set with intrinsic density~$\frac{1}{2}$ is itself Schnorr random; one can show that if $A$ has intrinsic density~$\frac{1}{2}$, so does $A\oplus A$.)

\section{Intrinsic density in reverse mathematics}\label{sec:idReverseMath}

We note that the proofs of Theorems~\ref{thm:wctDensity} and \ref{thm:id0Construction}, as given above, appear sufficiently constructive to hold in $\mathbf{RCA}_0$. This suggests that, in the sense of reverse mathematics, the existence of a non-weakly-computably-traceable set should imply the existence of a set with intrinsic density 0, which should in turn imply the existence of a function that cofinitely differs from every computable function (sometimes called an \emph{eventually different} function). Presuming that Kjos-Hanssen, Merkle, and Stephan's Theorem~5.1 \cite*{complexityRecursion} holds in $\mathbf{RCA}_0$, this should give a full reverse-mathematical equivalence. Using another part of their theorem, these principles should also be equivalent to the existence of either a dominating or DNR function, extending our computability-based characterization to a fact of reverse mathematics.

To make this more precise, all of these principles need to be formally specified. Fortunately, in their investigation of cardinal invariants and reverse mathematics, \citet*{weaklyRepresented} have developed a framework convenient for this purpose, and have already formalized the existence of an eventually-different function. We recall their definitions:

\begin{defn}[Weakly-represented partial functions]
A partial function $f$ is weakly represented by the set $A$ if all of the following conditions hold:
\begin{itemize}
\item{} [Representation] For all $x$ and $y$, $f(x)\downarrow=y$ iff there is some $z$ such that $\abrack{x,y,z}\in A$. We say $A$ \emph{witnesses} that $f(x)$ converges to $y$.
\item{} [Consistency] If $\abrack{x,y,z}$ and $\abrack{x,y',z'}$ are both in $A$, then $y=y'$.
\item{} [Monotonicity] If $\abrack{x,y,z}\in A$, then for all $z'>z$, we also have $\abrack{x,y,z}\in A$.
\item{} [Downward closure] If $A$ witnesses that $f(x)$ converges, then it also witnesses that $f(t)$ converges for all $t<x$.
\end{itemize}

By convention, for $f$ weakly represented by $A$, we say that \emph{$f(x)$ converges to $y$ by step $z$} ($f(x)[z]\downarrow=y$) if $y<z$ and $\abrack{x,y,z}\in A$. Along the same line, we say that $f(x)[s]$ converges iff it converges to some $y<s$; this restriction ensures that the question of whether $f(x)[s]$ converges is decidable in our representation of $f$.
\end{defn}

For the remainder of this section, we make common reference to an arbitrary model of second-order arithmetic,
\begin{equation*}
\mathcal{M}=\abrack*{M,S,+,\cdot,0,1},
\end{equation*}
where $M$ and $S$ are, respectively, the first- and second-order parts of the structure.

\begin{defn}[Weakly-represented families]
A class of partial functions $\set{f_e}_{e\in M}$ is weakly represented in our model iff $S$ contains a uniform family of sets $\set{A_e}_{e\in M}$ (represented by $A=\setbuildc{\abrack*{e,x}}{x\in A_e}\in S$) such that $A_e$ weakly represents $f_e$.

A class of total functions $\mathcal{F}$ is weakly represented in our model iff $S$ contains $\mathcal{F}$ and a weakly-represented class of partial functions $\set{f_e}_{e\in M}$ such that a total function $f$ is in $\mathcal{F}$ iff $f=f_e$ for some $e\in M$.
\end{defn}

Restricting ourselves to 0-1 functions in the latter case naturally provides the idea of a weakly-represented family of sets.

These definitions enable us to discuss the subset of total functions within a larger class of partial functions. For instance, the family of all computable functions (or sets) is weakly representable in $\mathbf{RCA}_0$.

H\"olzl, Raghavan, Stephan, and Zhang defined these notions to formulate, as reverse-mathematical principles, the many concepts from classical computability theory which naturally address the class of total functions, such as dominating functions (their $\mathbf{DOM}$) or cohesive sets ($\mathbf{COHW}$); they also introduce a principle of particular relevance to us, the existence of an eventually-different (or, in their terminology, avoiding) function ($\mathbf{AVOID}$).

\begin{stmt}[$\mathbf{DOM}$]
For every weakly-represented family of total functions $\mathcal{F}$, there is a function $g$ such that, for each $f\in\mathcal{F}$, there is some $b\in M$ such that $g(x)>f(x)$ for all $x>b$.
\end{stmt}

\begin{stmt}[$\mathbf{COHW}$]
For every weakly-represented family of sets $\mathcal{F}$, there exists an $\mathcal{F}$-cohesive set.
\end{stmt}

\begin{stmt}[$\mathbf{AVOID}$]
For every weakly-represented family of total functions $\mathcal{F}$, there is a function $g$ such that for each $f\in\mathcal{F}$, the set $\setbuildc{x\in M}{f(x)=g(x)}$ is bounded.
\end{stmt}

In this vein, we can now state the existence of a set of intrinsic density 0 as a reverse\hyp{}mathematical principle:

\begin{stmt}[$\mathbf{ID0}$]
For every weakly-represented class of total functions $\mathcal{F}$, there exists a set $A$ such that every injective $f\in\mathcal{F}$ samples $A$ with density 0. That is, taking $\what{\mathcal{F}}$ to be the weakly-represented class of total injections contained in $\mathcal{F}$,
\begin{equation*}
\paren*{\exists A}\paren*{\forall f\in\what{\mathcal{F}}}\bracket*{\rho(f^{-1}(A))=0}.
\end{equation*}
\end{stmt}

We can similarly give an alternate form of $\mathbf{DNR}$, equivalent to the standard form over $\textbf{RCA}_0$:

\begin{stmt}[$\mathbf{DNRW}$]
For every weakly-represented family of (partial) functions $\mathcal{F}=\set{f_e}_{e\in M}$, there exists a function $F$ such that $F(e)\ne f_e(e)$ for all $e\in M$ such that $f_e(e)\downarrow$.
\end{stmt}

\begin{stmt}[$\mathbf{DNR}$]
For every set $A$, there is a function $f:M\to M$ that is diagonally non-recursive in $A$; that is, for all $x\in M$, $\Phi^A_x(x)$ does not converge to $f(x)$.
\end{stmt}

\begin{thm}\label{thm:weakDNR}
Over $\mathbf{RCA}_0$, $\mathbf{DNR}$ and $\mathbf{DNRW}$ are equivalent.
\end{thm}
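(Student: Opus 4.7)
My plan is to prove the two implications separately, each within $\mathbf{RCA}_0$. The only real subtlety I anticipate is the downward-closure clause in the definition of weak representation, which forces every weakly-represented partial function to have a downward-closed domain; I will handle this with a padding trick in the $\mathbf{DNRW}\Rightarrow\mathbf{DNR}$ direction.

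For $\mathbf{DNRW}\Rightarrow\mathbf{DNR}$, given a set $A$ I define a family $\{f_e\}_{e\in M}$ by $f_e(x)=0$ for $x<e$, $f_e(e)=\Phi^A_e(e)$ when the latter converges, and $f_e(x)\uparrow$ for $x>e$. The padding by zeros on $[0,e)$ is exactly what makes each $f_e$'s domain a (possibly proper) initial segment of $\{0,\dots,e\}$, so downward closure will hold; representation, consistency, and monotonicity are immediate. An explicit representing set
\[B=\{\langle e,\langle t,0,z\rangle\rangle : t<e,\ z\ge 1\}\cup\{\langle e,\langle e,y,z\rangle\rangle : \Phi^A_e(e)[z]\downarrow=y<z\}\]
is $\Delta^0_1(A)$ and so exists in $\mathbf{RCA}_0$ by recursive comprehension. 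Applying $\mathbf{DNRW}$ to this family then yields a total $F$ with $F(e)\ne f_e(e)=\Phi^A_e(e)$ whenever the right side converges, which is exactly a DNR function relative to $A$.

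For $\mathbf{DNR}\Rightarrow\mathbf{DNRW}$, let $\{f_e\}_{e\in M}$ be weakly represented by $A\in S$. Searching $A$ for a witness of the shape $\langle e,\langle e,y,z\rangle\rangle$ shows that $e\mapsto f_e(e)$ is partial $A$-computable, so there is some index $d$ with $\Phi^A_d(e)\simeq f_e(e)$ for every $e$. The relativized $s$-$m$-$n$ theorem (provable in $\mathbf{RCA}_0$) then produces a primitive recursive $e\mapsto i_e$ with $\Phi^A_{i_e}(x)\simeq \Phi^A_d(e)$ for all $x$, so in particular $\Phi^A_{i_e}(i_e)\simeq f_e(e)$. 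Feeding $A$ to $\mathbf{DNR}$ to obtain $F_0$ and setting $F(e)=F_0(i_e)$ gives the required total function: when $f_e(e)\downarrow$, the value $\Phi^A_{i_e}(i_e)$ converges to $f_e(e)$, so $F(e)=F_0(i_e)\ne\Phi^A_{i_e}(i_e)=f_e(e)$.

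The one step that I expect to need real care is verifying the weak-representation axioms (particularly downward closure) for the padded family $\{f_e\}$ in the first direction, and confirming that its representing set $B$ is genuinely $\Delta^0_1(A)$ rather than something more complex. Once this bookkeeping is in place, both implications reduce to $\Delta^0_1$-comprehension and the $s$-$m$-$n$ theorem, both of which are standard tools in $\mathbf{RCA}_0$.
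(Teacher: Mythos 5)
Your proposal is correct, and both implications go through in $\mathbf{RCA}_0$ as you describe. The forward direction ($\mathbf{DNRW}\Rightarrow\mathbf{DNR}$) is essentially the paper's argument: both of you identify the downward-closure clause as the obstruction to weakly representing the standard listing $\{\Phi^A_e\}$ directly, and both repackage the diagonal value $\Phi^A_e(e)$ into a function with downward-closed domain. The paper does this by taking $f^A_e$ to be the constant function with value $\Phi^A_e(e)$ (domain empty or all of $M$), whereas you pad with zeros on $[0,e)$ and place the diagonal value at $e$ itself; the two devices are interchangeable, and your explicit $\Delta^0_1(A)$ representing set $B$ makes the recursive-comprehension step transparent. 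The converse is where you genuinely diverge: the paper interleaves the given family $\{f_e\}$ with the standard listing to form an ``effective universal listing'' $\{g_k\}$ of partial $A$-computable functions and then invokes $\mathbf{DNR}$ against that listing, which tacitly relies on a translation between numberings; you instead apply the relativized $s$-$m$-$n$ theorem directly to the partial $A$-computable diagonal map $e\mapsto f_e(e)$, obtaining indices $i_e$ with $\Phi^A_{i_e}(i_e)\simeq f_e(e)$ and composing the $\mathbf{DNR}$ function with $e\mapsto i_e$. Your route makes explicit exactly the translation step the paper leaves implicit, at the cost of invoking $s$-$m$-$n$ by name; the paper's route packages that translation into the claim that $\{g_k\}$ is an effective universal listing. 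Both are sound over $\mathbf{RCA}_0$.
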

\begin{proof}
To see that $\mathbf{DNRW}$ implies $\mathbf{DNR}$, we note that though we cannot weakly represent the standard listings of partial functions $\set{\phi^A_e}$ (since $\phi^A_e$'s domain need not be an initial segment of $M$), we can weakly represent the family of partial functions $\set{f^A_e}$ given by $f^A_e(n)=\phi_e(e)$. Any function diagonally disagreeing with $\set{f^A_e}$ must in fact be $\mathrm{DNR}$ relative to $A$.

The converse implication is also relatively straightforward. Fixing some weakly\hyp{}represented family of partial functions $\set{f_e}$ uniformly computable from $A$, we pass to another family, still uniformly computable from $A$:
\begin{align*}
g_{2e}(2n)=g_{2e}(2n+1)&=f_e(n),\\
g_{2e+1}(n)&=\phi^A_e(n).
\end{align*}
Since the $f_e$'s were uniformly computable from $A$, this family $\set{g_k}$ is an $\emph{effective}$ universal listing of partial $A$-computable functions; therefore, by $\mathbf{DNR}$, there is some $G$ such that if $g_k(k)\downarrow$, then $G(k)\ne g_k(k)$. In particular, if $f_e(e)=g_{2e}(2e)\downarrow$, then $G(2e)\ne f_e(e)$. Thus, defining $F(e)=G(2e)$, we see that our original family $\set{f_e}$ satisfies $\mathbf{DNRW}$ via $F$.
\end{proof}

Given our interest in the existence of either a DNR or dominating function, we formulate this disjunction as a principle as well:
\begin{stmt}[$\mathbf{DNRW}\lor\mathbf{DOM}$]
For every weakly-represented family of (partial) functions $\mathcal{F}=\set{f_e}_{e\in M}$, there exists either a function $F$ dominating all total functions in $\mathcal{F}$ or a function $g$ such that $g(e)\ne f_e(e)$ for all $e\in M$.
\end{stmt}
All of our proofs would hold if we restricted ourselves to classes containing only total functions, avoiding the complication of weak representation --- but in this case, our results would be trivial, as all of these simplified principles are true in $\mathbf{RCA}_0$.

Our proof of Theorem~\ref{thm:wctDensity} in fact shows that $\mathbf{ID0}$ implies $\mathbf{AVOID}$ over $\mathbf{RCA}_0$, using no additional assumptions or induction.

Since the existence of a non-weakly-computable-traceable set does not lend itself to formalization over $\mathbf{RCA}_0$, we instead work through another part of Theorem~5.1 of \citet*{complexityRecursion}; in particular, the part of their proof labeled ``(1) implies (2)'' shows that $\mathbf{AVOID}$ implies $\mathbf{DNRW}\lor\mathbf{DOM}$, using only methods available in $\mathbf{RCA}_0$.

It therefore suffices to show that $\mathbf{DNRW}\lor\mathbf{DOM}$ implies $\mathbf{ID0}$ over $\mathbf{RCA}_0$. This can be checked by taking the proof of our Theorem~\ref{thm:id0Construction} together with three of the subproofs from Kjos-Hanssen, Merkle, and Stephan's Theorem~5.1 \cite{complexityRecursion}, factoring out the intermediate steps, and verifying all remaining claims in $\mathbf{RCA}_0$. However, this process is quite involved, as one could easily lose track of a dependence on something not present in $\mathbf{RCA}_0$; for the reader's reference, we therefore include the full proof we obtained by this method.

\begin{thm}\label{thm:DNRorDOMid0}
$\mathbf{RCA}_0+(\mathbf{DNRW}\lor\mathbf{DOM})\models\mathbf{ID0}$.
\end{thm}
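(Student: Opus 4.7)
The plan is to mimic the proof of Theorem~\ref{thm:id0Construction} within $\mathbf{RCA}_0$, using $\mathbf{DNRW} \lor \mathbf{DOM}$ in place of the hypothesis ``$A$ is not weakly computably traceable.'' Given a weakly-represented family $\mathcal{F}$ of total functions as in the statement of $\mathbf{ID0}$, the goal is to produce a function $f$ that is not $h$-traceable (for $h(n)=n^2$) by any trace effectively constructible from $\mathcal{F}$, and then to verify that $A = G_f = \setbuildc{\abrack{n,f(n)}}{n\in M}$ has the desired sampling property.

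The first step is to set up a weakly-represented family $\mathcal{G}$ of partial functions encoding the candidate traces we must foil. Concretely, for each index $e$ coding an injection $\pi_e$ from $\mathcal{F}$ together with a rational threshold $\frac{1}{q}$, $\mathcal{G}$ contains the partial function $n \mapsto D_n$ where $D_n = \set{y : \abrack{x,y} \in \pi_e([0,(n+1)q))}$---exactly the traces produced in the proof of Theorem~\ref{thm:id0Construction}. These have size at most $(n+1)q$, which is bounded by $h(n) = n^2$ past some point. Verifying that $\mathcal{G}$ satisfies the representation, consistency, monotonicity, and downward-closure conditions of a weakly-represented family is routine bookkeeping.

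Next, I would apply $\mathbf{DNRW} \lor \mathbf{DOM}$ to a suitable recoding of $\mathcal{G}$. In the $\mathbf{DNRW}$ case, the DNR function $\phi$ diagonalizes against each trace at its index; a standard argument (the ``DNR implies not WCT'' direction of Kjos-Hanssen--Merkle--Stephan's Theorem~5.1) extracts from $\phi$ a total function $f$ with $f(n) \notin V_n$ for almost every $n$ and every trace $\set{V_n}$ induced by an element of $\mathcal{G}$. In the $\mathbf{DOM}$ case, a dominating $F$ lets us enumerate sufficiently many elements of each candidate trace at each stage, after which we choose $f(n)$ to miss the relevant finite union of traces; this is the other direction of KMS. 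In either case we obtain an $n^2$-untraceable $f$.

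Finally, with $f$ in hand, I would set $A = G_f$ and rerun the permutation-to-trace argument from Theorem~\ref{thm:id0Construction}: if some injection $\pi\in\mathcal{F}$ sampled $A$ with partial density exceeding $\frac{1}{q}$ past some $N$, the induced trace would lie in $\mathcal{G}$ and catch $f(m)$ for infinitely many $m$, contradicting untraceability. The main obstacle is carrying out the two KMS sub-constructions in $\mathbf{RCA}_0$ without invoking extra induction---in particular, recasting the ``for infinitely many $n$'' and ``for almost every $n$'' claims so they can be handled with only $\Delta^0_1$ comprehension and $\Sigma^0_1$ induction, and ensuring $\mathcal{G}$ can be defined without appealing to the full computability-theoretic machinery used classically. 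I would present the DNR and DOM sub-constructions as separate lemmas before assembling them.
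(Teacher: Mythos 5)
Your high-level strategy is the one the paper itself says it followed: compose the proof of Theorem~\ref{thm:id0Construction} with the relevant sub-proofs of Kjos-Hanssen--Merkle--Stephan's Theorem~5.1 and verify everything in $\mathbf{RCA}_0$. But the written proof in the paper is the \emph{factored} version of that composition, and the factoring is not cosmetic --- it is where the intermediate object you rely on (an ``$n^2$-untraceable $f$'' whose graph you then take) gets eliminated, precisely because that object is the part that resists direct formalization. In the $\mathbf{DOM}$ case the paper does not build a graph at all: it takes $I$ to be the image of a strictly increasing dominating function $F$, and derives the contradiction by showing that an injection sampling $I$ with positive upper density yields a function $h\leT\mathcal{F}$ with $h(n)>F(n)$ unboundedly often. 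In the $\mathbf{DNRW}$ case the set produced is a prefix set $P=\set{A\upto p(n)}_{n\in M}$ with $A=G(g)$, and untraceability of $n\mapsto A\upto p(n)$ is proved by a compression argument: a trace catching $A\upto p(N)$ yields a short program $\sigma_N$ with $f_{e(\sigma_N)}=\Psi^{U(\sigma_N)}$, which forces $f_{e(\sigma_N)}(e(\sigma_N))\ne g(e(\sigma_N))$ and hence $p(N)\le\abrack{e(\sigma_N),g(e(\sigma_N))}$, contradicting the definition of $p$.

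The genuine gap in your proposal is the sentence ``a standard argument (the `DNR implies not WCT' direction of KMS) extracts from $\phi$ a total function $f$ with $f(n)\notin V_n$ for almost every $n$.'' That extraction is the entire mathematical content of the hard case, and it does not transfer to $\mathbf{RCA}_0$ as stated: classically it runs through indices of partial computable functions and the recursion theorem, and here it must be redone against a weakly-represented family, which requires fixing a universal-for-$\mathcal{F}$ machine $U$, the index map $e(\sigma)$ with $f_{e(\sigma)}=\Psi^{U(\sigma)}$, the length bound $p(n)=1+\max_{\abs{\sigma}<5\log n}\abrack{e(\sigma),g(e(\sigma))}$, and the ``$\mathcal{F}$ is universal'' convention that lets traces computable \emph{from} $\mathcal{F}$ be coded \emph{inside} $\mathcal{F}$. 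None of this appears in your sketch. Relatedly, applying $\mathbf{DNRW}$ ``to a suitable recoding of $\mathcal{G}$'' only yields a function disagreeing with each trace-function on the diagonal, which is far weaker than escaping every trace almost everywhere; bridging that gap is again the omitted KMS machinery. Your $\mathbf{DOM}$ case is essentially fine (domination of $n\mapsto 1+\max V_n$ does the job and survives the simplification to the image of $F$), but the $\mathbf{DNRW}$ half needs to be written out in the weakly-represented framework before this counts as a proof.
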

\begin{proof}
Consider some weakly-represented family of (partial) functions $\mathcal{F}=\set{f_e}$; without loss of generality, we assume $\mathcal{F}$ is universal. By $\mathbf{DNRW}\lor\mathbf{DOM}$, there exists either a function $F$ dominating all total functions in $\mathcal{F}$ or a function $f$ with $f(n)\ne f_n(n)$ for all $n\in M$.

Suppose that there is a function $F$ dominating all total functions in $\mathcal{F}$; without loss of generality, we may assume $F$ to be strictly increasing. We define $I$ to be the image of $F$:
\begin{equation*}
I=\setbuildc{n\in M}{\paren{\exists s}\bracket*{f(s)=n}}.
\end{equation*}
Suppose there is some total injective $f_e\in\mathcal{F}$ sampling $I$ with positive upper density. We then choose some $q\in M$ with $\ud(f_e^{-1}(S))>\frac{1}{q}$, and define $h(n)=1+\max_{s\le(n+1)q}{f_e(s)}$.

By our choice of $f_e$, there is an unbounded set of $s$'s such that $f_e(\left[0,s\right))$ contains at least $\frac{s}{q}$ values of $F$, and thus includes $F(m)$ for some $m\ge\frac{s}{q}-1$. Therefore, there is an unbounded set of $n$'s such that $g(\left[0,(n+1)q\right))$ contains $F(n)$.

For each such $n$, we have $h(n)>F(n)$. Since $F$ is a dominating function for all functions in $\mathcal{F}$, this implies that $h(n)\not\in\mathcal{F}$. However, $h\leT g\in\mathcal{F}$, contradicting our assumption that $\mathcal{F}$ is universal. Therefore, if every total function in $\mathcal{F}$ is dominated by $F$, every total injection in $\mathcal{F}$ samples $I$ with density 0.

On the other hand, suppose instead that there is a function $g$ with $g(n)\ne f_n(n)$ for all $n\in M$. Fix a universal-for-$\mathcal{F}$ machine $U$, and let $\Psi$ be the computable functional such that
\begin{equation*}
\Psi^X(x)=\paren{\mu y}\bracket*{\abrack*{x,y}\in X}
\end{equation*}
if for all $x'\le x$, there exists some $y'$ such that $\abrack*{x',y'}\in X$; otherwise, $\Psi^X(x)\uparrow$.

We then define $e(\sigma)$ (for any $M$-finite binary string $\sigma$) such that $f_{e(\sigma)}=\Psi^{U(\sigma)}$ (possible, since $\mathcal{F}$ is universal), and let
\begin{equation*}
p(n)=1+\max_{\abs{\sigma}<5\log{n}}{\abrack*{e(\sigma),g(e(\sigma))}}.
\end{equation*}
Since $p\leT g$, $p$ exists in $\mathcal{M}$.

Using this, we define the set $P=\set{A\upto p(n)}_{n\in M}$, where $A=G(g)$; we will show that every total injective $f_e\in\mathcal{F}$ samples $P$ with density 0.

Suppose, for the sake of contradiction, that there is some $s$ such that $f_e$ is a total injection sampling $P$ with positive upper density; specifically, take this upper density to be greater than $\frac{1}{q}$. We define $h$ so that $h(n)$ codes the $M$-finite set $D_{h(n)}=f_e(\left[0,n^2\right))$; clearly, $h\leT f_e\leT\mathcal{F}$.

By assumption, there is an unbounded set of $s$'s such that $f_e(\left[0,s\right))$ contains at least $\frac{s}{q}$ elements of $P$, including $(A\upto p(m))$ for some $m\ge\frac{s}{q}-1$. Therefore, there is an unbounded set of $n$'s such that $f_e(\left[0,(n+1)q\right))$ contains $(A\upto p(n))$. Since $n^2\ge(n+1)q$ for all sufficiently large $n$, there is an unbounded set of $n$'s such that $(A\upto p(n))\in D_{h(n)}$.

Take $k(n)$ to be the 2-to-1 prefix-free binary coding of $n\in M$ (with end symbol) in $2\log{n}+2$ bits. For $x<n^2$, let $c_n(x)$ be the standard binary coding of $x$ in $2\log{n}$ bits. Since $U$ is a universal machine for $\mathcal{F}$ and $h\leT\mathcal{F}$, there is a string $\sigma$ such that, for $x<n^2$, $U(\sigma\concat k(n)\concat c_n(x))$ is the $x$-th element of $D_{h(n)}$.

For all $n$ such that $(A\upto p(n))\in D_{h(n)}$, we define $\sigma_n=\sigma\concat k(n)\concat c_n(x)$, where $x$ is the index of $(A\upto p(n))$ in $D_{h(n)}$. By construction, $\abs{\sigma_n}=4\log{n}+\abs{\sigma}+2$, so for all sufficiently large $n$, we have $\abs{\sigma_n}<5\log{n}$.

Choose some $N$ such that $(A\upto p(N))\in D_{h(N)}$ and $\abs{\sigma_N}<5\log{N}$. By the definition of $e(\sigma)$, we have that
\begin{equation*}
f_{e(\sigma_N)}(e(\sigma_N))=\Psi^{U(\sigma_N)}(e(\sigma_N))=\Psi^{A\upto p(N)}(e(\sigma_N)).
\end{equation*}
However, $\Psi^A(e(\sigma_N))=g(e(\sigma_N))$, by our choice of $A$ and $\Psi$. Since we assumed $g(e)\ne f_e(e)$ for any $e\in M$ where $f_e(e)$ converges, we must have $\Psi^A(e(\sigma_N))\ne f_{e(\sigma_N)}(e(\sigma_N))$. Therefore,
\begin{equation*}
\Psi^{A\upto p(N)}(e(\sigma_N))\ne\Psi^A(e(\sigma_N))=g(e(\sigma_N)).
\end{equation*}
In other words, $p(N)\le\abrack*{e(\sigma_N),g(e(\sigma_N))}$. Since $\abs{\sigma_N}<5\log{n}$, this contradicts our definition of $p$; therefore, no total injection in $\mathcal{F}$ samples $P$ with positive upper density.
\end{proof}

Combining this with our previous observations, we have confirmed the result anticipated at the start of this section:

\begin{thm}
The following principles are equivalent over $\mathbf{RCA}_0$:
\begin{itemize}
\item $\mathbf{ID0}$,
\item $\mathbf{AVOID}$, and
\item $\mathbf{DNRW}\lor\mathbf{DOM}$.
\end{itemize}
\end{thm}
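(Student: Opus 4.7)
The plan is to close a three-way cycle of implications, each of which has already been set up by the preceding discussion.

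I would first observe that $\mathbf{ID0} \Rightarrow \mathbf{AVOID}$ follows from adapting the proof of Theorem~\ref{thm:wctDensity}. Given a weakly-represented family $\mathcal{F}$ of total functions to be avoided, I would uniformly encode each $h \in \mathcal{F}$ as a computable injection $g_h$ via the recipe of that proof --- $g_h(j)$ is the position of the $j$-th $1$ in $h(n)$ for $j \in \left[(n-1)!, n!\right)$, with the standard fallback when the requisite value has already been assigned --- and collect these into a new weakly-represented family $\mathcal{G}$. Applying $\mathbf{ID0}$ to $\mathcal{G}$ yields an infinite set $A$, and the function $f(n) = A \upto p_A(n!)$ must then disagree cofinitely with every total $h \in \mathcal{F}$, since otherwise the corresponding $g_h$ would sample $A$ with upper density at least $1$, contradicting intrinsic density $0$. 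All definitions here are $\Delta^0_1$ uniformly in the data, so the argument carries out in $\mathbf{RCA}_0$.

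Next, $\mathbf{AVOID} \Rightarrow \mathbf{DNRW} \lor \mathbf{DOM}$ is the ``(1) implies (2)'' direction of Theorem~5.1 of \citet{complexityRecursion}; as the paragraphs preceding Theorem~\ref{thm:DNRorDOMid0} observe, that argument uses only tools available in $\mathbf{RCA}_0$, so I would simply cite it. Finally, $\mathbf{DNRW} \lor \mathbf{DOM} \Rightarrow \mathbf{ID0}$ is Theorem~\ref{thm:DNRorDOMid0} itself, so no further work is needed to close the cycle.

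The hardest part will be verifying the first implication rigorously in $\mathbf{RCA}_0$. One must show that the family of injections $\mathcal{G}$ is genuinely weakly representable (not merely uniformly computable from $\mathcal{F}$), that the $\mathbf{ID0}$ application can be arranged to produce an infinite $A$ (for instance by pre-padding $\mathcal{G}$ so that any sampling-avoiding set is automatically infinite), and that the principal function $p_A$ is available in $\mathcal{M}$ by $\Delta^0_1$-recursion on $A$. None of these steps is deep, but each warrants explicit verification to ensure that we do not inadvertently appeal to arithmetic comprehension.
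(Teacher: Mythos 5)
Your proposal is correct and follows exactly the paper's route: the paper establishes this theorem by the same three-way cycle, with $\mathbf{ID0}\Rightarrow\mathbf{AVOID}$ read off from the proof of Theorem~\ref{thm:wctDensity}, $\mathbf{AVOID}\Rightarrow\mathbf{DNRW}\lor\mathbf{DOM}$ cited from the ``(1) implies (2)'' portion of Kjos-Hanssen--Merkle--Stephan, and $\mathbf{DNRW}\lor\mathbf{DOM}\Rightarrow\mathbf{ID0}$ being Theorem~\ref{thm:DNRorDOMid0}. Your extra care about weak representability of the injection family and the infinitude of $A$ is a reasonable elaboration of details the paper leaves implicit.
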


\addcontentsline{toc}{chapter}{References}

\bibliographystyle{plainnat}
\bibliography{idcPaper}

\begin{thebibliography}{20}
\providecommand{\natexlab}[1]{#1}
\providecommand{\url}[1]{\texttt{#1}}
\expandafter\ifx\csname urlstyle\endcsname\relax
  \providecommand{\doi}[1]{doi: #1}\else
  \providecommand{\doi}{doi: \begingroup \urlstyle{rm}\Url}\fi

\bibitem[Astor(2015)]{dirPaper}
Eric~P. Astor.
\newblock Asymptotic density, immunity and randomness.
\newblock \emph{Computability}, 4\penalty0 (2):\penalty0 141--158, 2015.
\newblock ISSN 2211-3568.
\newblock \doi{10.3233/COM-150040}.

\bibitem[Astor et~al.(In preparation)Astor, Hirschfeldt, and
  Jockusch]{upperCones}
Eric~P. Astor, Denis~R. Hirschfeldt, and Carl~G. Jockusch, Jr.
\newblock Dense computability, upper cones, and minimal pairs.
\newblock In preparation.

\bibitem[Downey and Hirschfeldt(2010)]{dhBook}
Rodney~G. Downey and Denis~R. Hirschfeldt.
\newblock \emph{Algorithmic Randomness and Complexity}.
\newblock Theory and Applications of Computability. Springer, 2010.

\bibitem[Downey et~al.(2013)Downey, Jockusch, and Schupp]{DJSdensity}
Rodney~G. Downey, Carl~G. Jockusch, Jr., and Paul~E. Schupp.
\newblock Asymptotic density and computably enumerable sets.
\newblock \emph{Journal of Mathematical Logic}, 13\penalty0 (02), 2013.
\newblock ISSN 0219-0613.
\newblock \doi{10.1142/S0219061313500050}.

\bibitem[Downey et~al.(2015)Downey, Jockusch, McNicholl, and
  Schupp]{ershovDensity}
Rodney~G. Downey, Carl~G. Jockusch, Jr., Timothy~H. McNicholl, and Paul~E.
  Schupp.
\newblock Asymptotic density and the {E}rshov hierarchy.
\newblock \emph{Mathematical Logic Quarterly}, 61\penalty0 (3):\penalty0
  189--195, 2015.
\newblock ISSN 1521-3870.
\newblock \doi{10.1002/malq.201300081}.

\bibitem[Dzhafarov and Igusa(2017)]{robustCoding}
Damir~D. Dzhafarov and Gregory Igusa.
\newblock Notions of robust information coding.
\newblock \emph{Computability}, 6\penalty0 (2):\penalty0 105--124, 2017.
\newblock ISSN 2211-3568.
\newblock \doi{10.3233/COM-160059}.

\bibitem[Hirschfeldt et~al.(2016{\natexlab{a}})Hirschfeldt, Jockusch, Kuyper,
  and Schupp]{coarseReducibility}
Denis~R. Hirschfeldt, Carl~G. Jockusch, Jr., Rutger Kuyper, and Paul~E. Schupp.
\newblock Coarse reducibility and algorithmic randomness.
\newblock \emph{The Journal of Symbolic Logic}, 81\penalty0 (3):\penalty0
  1028--1046, 2016{\natexlab{a}}.
\newblock ISSN 0022-4812.
\newblock \doi{10.1017/jsl.2015.70}.

\bibitem[Hirschfeldt et~al.(2016{\natexlab{b}})Hirschfeldt, Jockusch,
  McNicholl, and Schupp]{coarseBound}
Denis~R. Hirschfeldt, Carl~G. Jockusch, Jr., Timothy~H. McNicholl, and Paul~E.
  Schupp.
\newblock Asymptotic density and the coarse computability bound.
\newblock \emph{Computability}, 5\penalty0 (1):\penalty0 13--27,
  2016{\natexlab{b}}.
\newblock ISSN 2211-3568.
\newblock \doi{10.3233/COM-150035}.

\bibitem[H{\"o}lzl et~al.(2017)H{\"o}lzl, Raghavan, Stephan, and
  Zhang]{weaklyRepresented}
Rupert H{\"o}lzl, Dilip Raghavan, Frank Stephan, and Jing Zhang.
\newblock \emph{Weakly Represented Families in Reverse Mathematics}, pages
  160--187.
\newblock Springer International Publishing, Cham, 2017.
\newblock ISBN 978-3-319-50062-1.
\newblock \doi{10.1007/978-3-319-50062-1_13}.

\bibitem[Igusa(2013)]{igusaNoMinimalPair}
Gregory Igusa.
\newblock Nonexistence of minimal pairs for generic computability.
\newblock \emph{The Journal of Symbolic Logic}, 78\penalty0 (2):\penalty0
  511--522, 2013.
\newblock ISSN 0022-4812.
\newblock \doi{10.2178/jsl.7802090}.

\bibitem[Jockusch(1973)]{upwardClosure}
Carl~G. Jockusch, Jr.
\newblock Upward closure and cohesive degrees.
\newblock \emph{Israel Journal of Mathematics}, 15\penalty0 (3):\penalty0
  332--335, 1973.
\newblock ISSN 0021-2172.
\newblock \doi{10.1007/BF02787575}.

\bibitem[Jockusch and Schupp(2012)]{JSgc}
Carl~G. Jockusch, Jr. and Paul~E. Schupp.
\newblock Generic computability, {T}uring degrees, and asymptotic density.
\newblock \emph{Journal of the London Mathematical Society}, 85\penalty0
  (2):\penalty0 472--490, 2012.
\newblock ISSN 0024-6107.
\newblock \doi{10.1112/jlms/jdr051}.

\bibitem[Jockusch and Stephan(1993)]{nonhighCohesive}
Carl~G. Jockusch, Jr. and Frank Stephan.
\newblock A cohesive set which is not high.
\newblock \emph{Mathematical Logic Quarterly}, 39\penalty0 (1):\penalty0
  515--530, 1993.
\newblock ISSN 1521-3870.
\newblock \doi{10.1002/malq.19930390153}.

\bibitem[Kapovich et~al.(2003)Kapovich, Myasnikov, Schupp, and
  Shpilrain]{genericComplexity}
Ilya Kapovich, Alexei Myasnikov, Paul~E. Schupp, and Vladimir Shpilrain.
\newblock Generic-case complexity, decision problems in group theory, and
  random walks.
\newblock \emph{Journal of Algebra}, 264\penalty0 (2):\penalty0 665--694, 2003.
\newblock ISSN 0021-8693.
\newblock \doi{10.1016/S0021-8693(03)00167-4}.

\bibitem[Kjos-Hanssen et~al.(2011)Kjos-Hanssen, Merkle, and
  Stephan]{complexityRecursion}
Bj{\o}rn Kjos-Hanssen, Wolfgang Merkle, and Frank Stephan.
\newblock Kolmogorov complexity and the {R}ecursion {T}heorem.
\newblock \emph{Transactions of the American Mathematical Society},
  363\penalty0 (10):\penalty0 5465--5480, 2011.
\newblock ISSN 0002-9947.
\newblock \doi{10.1090/S0002-9947-2011-05306-7}.

\bibitem[Martin(1966)]{martin66}
Donald~A. Martin.
\newblock Classes of recursively enumerable sets and degrees of unsolvability.
\newblock \emph{Mathematical Logic Quarterly}, 12\penalty0 (1):\penalty0
  295--310, 1966.
\newblock ISSN 1521-3870.
\newblock \doi{10.1002/malq.19660120125}.

\bibitem[Miller and Nies(2006)]{permutationRandomness}
Joseph~S. Miller and Andr{\'e} Nies.
\newblock Randomness and computability: Open questions.
\newblock \emph{The Bulletin of Symbolic Logic}, 12\penalty0 (3):\penalty0
  390--410, 2006.
\newblock ISSN 1079-8986.
\newblock \doi{10.2178/bsl/1154698740}.

\bibitem[Nies et~al.(2005)Nies, Stephan, and Terwijn]{randomDegrees}
Andr{\'e} Nies, Frank Stephan, and Sebastiaan~A. Terwijn.
\newblock Randomness, relativization, and turing degrees.
\newblock \emph{The Journal of Symbolic Logic}, 70\penalty0 (2):\penalty0
  515--535, 2005.
\newblock ISSN 0022-4812.
\newblock \doi{10.2178/jsl/1120224726}.

\bibitem[Simpson(2009)]{SOSOA}
Stephen~G. Simpson.
\newblock \emph{Subsystems of second order arithmetic}.
\newblock Perspectives in Logic. Cambridge University Press, 2009.

\bibitem[Soare(2016)]{cta}
Robert~I. Soare.
\newblock \emph{Turing Computability: Theory and Applications}.
\newblock Theory and Applications of Computability. Springer-Verlag, 2016.

\end{thebibliography}
\end{document}